\theoremstyle{remark}
\newtheorem*{remark}{\bf Remark}
\newtheorem{pbm}{\bf Problem}
\theoremstyle{plain}
\newtheorem{theorem}{\bf Theorem}
\newtheorem{proposition}[theorem]{\bf Proposition}
\newtheorem{lemma}[theorem]{\bf Lemma}
\newtheorem{corollary}[theorem]{\bf Corollary}
\def\R{{\mathbb R}}
\def\D{\mathbb{D}}
\def\N{{\mathbb N}}
\def\cE{\mathcal{E}}
\def\cX{\mathcal{X}}
\def\cC{\mathcal{C}}
\def\om{\omega}
\DeclareMathOperator{\supp}{supp}
\DeclareMathOperator{\PSH}{PSH}
\def\and{{\quad\text{and}\quad}}
\begin{document}

\title{Regularity of push-forward of Monge-Amp\`ere measures}

\author{Eleonora Di Nezza}
\address{IHES Universit\'e Paris-Saclay, 35 route de Chartres, 91400 Bures sur Yvette, France}
\email{dinezza@ihes.fr}

\author{Charles Favre}
\address{CMLS, \'Ecole polytechnique, CNRS, Universit\'e Paris-Saclay, 91128 Palaiseau Cedex, France}
\email{charles.favre@polytechnique.edu}

\begin{abstract}
We prove that the image under any dominant meromorphic map of the Monge-Amp\`ere measure of a H\"older continuous quasi-psh function still
possesses a H\"older potential. We also discuss the case of lower regularity.
\end{abstract}

\maketitle

\begin{center}
\emph{Dedicated to Jean-Pierre Demailly for his 60th birthday
}
\end{center}

\medskip

%%%%%%%%%%%%%%%%%%%%%%%%%%%%%%%%%%%%%%%%%

\section*{Introduction}

Let $(X,\om_X)$ be a compact K\"ahler manifold of dimension $n$ normalized by  the volume condition  $\int_X \om_X^n =1$. We say that a potential $\varphi \in L^1(X)$ is $\omega_X$-plurisubharmonic ($\omega_X$-psh for short) if locally $\varphi$ is the sum of a plurisubharmonic and a smooth function, and $\omega_X+dd^c \varphi \geq 0$ in the weak sense of currents, where $d= \partial + \bar{\partial}$ and $d^c= \frac{i}{2\pi} ( \bar{\partial}- \partial )$ so that $dd^c = \frac{i}{\pi} \partial \bar{\partial}$. We denote by $\PSH(X, \omega_X)$ the set of all $\omega_X$-psh functions on $X$. Recall from \cite[Section 1]{GZ07} that the non-pluripolar Monge-Amp\`ere measure of a function $\varphi \in \PSH(X, \omega_X)$ is a positive measure defined as the increasing limit
$$
( \om_X + dd^c \varphi)^ n = \lim_{j \to + \infty} \mathbf{1}_{\{\varphi > -j\}} \, \left( \om_X + dd^c \max \{ \varphi, -j\}\right)^ n 
$$
where the right hand side is defined using Bedford-Taylor intersection theory of bounded psh functions, see~\cite{bedford-taylor}. 

One of the main result of \cite{GZ07} states that if $\mu$ is a probability  measure on $X$ which does not charge pluripolar sets, then
there exists a unique (up to  a constant) $\omega_X$-psh function $\varphi$ such that  $\int_X  ( \om_X + dd^c \varphi)^ n= 1$ and
\begin{equation}\label{MA}
\mu = ( \om_X + dd^c \varphi)^ n~.
\end{equation}
We denote by $\mathcal{E}{(X, \omega_X)}$ the set of all $\omega_X$-psh functions whose non-pluripolar Monge-Amp\`ere measure has full mass so that 
any solution to~\eqref{MA} belongs to $\mathcal{E}{(X, \omega_X)}$.

 In the same paper, Guedj and Zeriahi  introduced for any $p>0$ the subset $\mathcal{E}^p(X, \omega_X)$ of $\mathcal{E}(X,\omega_X)$ 
consisting of all $\omega_X$-psh functions satisfying the integrability condition $\varphi \in L^p ((\omega_X+dd^c \varphi)^n)$. 
Since $\omega_X$-psh functions are bounded from above it follows that  
$$  \mathcal{E}^p{(X, \omega_X)}  \subset \mathcal{E}^q{(X, \omega_X)}, \, \text{ for all } p>q.$$
Observe also that  any $\omega_X$-psh function lying in $L^\infty$  belongs to the intersection of all  $ \mathcal{E}^p{(X, \omega_X)} $. 

We shall say that a probability measure which does not charge pluripolar sets $\mu = ( \om_X + dd^c \varphi)^ n$ is a Monge-Amp\`ere measure having  H\"older, continuous, $L^\infty$ or $\cE^p$ potential for some $p>0$ whenever $\varphi$ is  H\"older, continuous, $L^\infty$ or belongs to the energy class $\cE^p(X,\omega_X)$ respectively. 

Let us now consider any dominant meromorphic map $f : X \dashrightarrow Y$ where $(Y,\om_Y)$ is also a compact K\"ahler manifold of volume $1$, and denote by $m$ its complex dimension.  Let $\Gamma$ be a resolution of singularities of the graph of $f$.  We obtain two surjective holomorphic maps $\pi_1 : \Gamma \to X$ and $\pi_2: \Gamma \to Y$
where $\pi_1$ is bimeromorphic so that $\Gamma$ is a modification of a compact K\"ahler manifold. By Hironaka's Chow lemma, see e.g.~\cite[Theorem 2.8]{Peternell} we may suppose that $\pi_1$ is a composition of blow-ups along smooth centers so that $\Gamma$ is itself a compact K\"ahler manifold of complex dimension $n$.  We fix any K\"ahler form $\omega_\Gamma$ on it.

One defines the push-forward under $f$ of a measure $\mu$ not charging pluripolar sets as follows. Since $\pi_1$ is a bimeromorphism, there exist two closed analytic subsets $R \subset \Gamma$ and $V \subset X$ such that $\pi_1: \Gamma \setminus R \to X\setminus V$ is a biholomorphism. One may thus set 
$\pi_1^* \mu$ to be the trivial extension through $R$ of $(\pi_1)|_{\Gamma \setminus R}^* \mu$. This measure is again a probability measure which does not charge pluripolar sets. 

We then define the probability measure $f_* \mu := (\pi_2)_* \pi_1^* \mu$. We observe that since $f$ is dominant then $\pi_2$ is surjective hence the preimage of a pluripolar set in $Y$ by $\pi_2$ is again pluripolar. By the preceding discussion,  there exists $\psi\in \mathcal{E}(Y, \omega_Y)$ such that $f_* \mu =  ( \om_Y + dd^c \psi)^{m}$. 

Our main goal is to discuss the following question. 

\begin{pbm}\label{pbm1}
Suppose $\mu$ is a Monge-Amp\`ere measure having H\"older, continuous, $L^\infty$ or $\cE^p$ potential.
Is it true that $f_* \mu$ is also  a Monge-Amp\`ere measure of a potential lying in the same class of regularity?
\end{pbm}

This problem is hard for Monge-Amp\`ere measures having either continuous or $L^\infty$ potentials since there is no known intrinsic
characterization of these measures.
For these classes of regularity even the case $ f$ is the identity map and $X =Y$ is still open (see for example \cite[Question 15]{open pbms}).
\begin{pbm}\label{pbm2}
Suppose $\mu$ is a probability measure on $X$ not charging pluripolar sets and write $\mu = ( \om + dd^c \varphi)^{ n} =  ( \om' + dd^c \varphi')^{ n}$ where
$\om, \om'$ are two K\"ahler forms of volume $1$. 
Is it true that $\varphi$ is continuous (resp. $L^\infty$) iff  $\varphi'$ is? 
\end{pbm}

An intrinsic characterization of Monge-Amp\`ere measure of H\"older functions is given by~\cite{demailly et al}, and of functions in the energy class $\mathcal{E}^p$ in~\cite[Theorem C]{GZ07}
so that Problem~\ref{pbm2} has a positive answer for these classes of regularity, see~\cite[Theorem 4.1]{DiN}. 
Problem~\ref{pbm1} remains though quite subtle. If we restrict our attention to the regularity in the $\mathcal{E}^p$ energy classes, then the answer is no in general.  
Suppose that $\pi: X \rightarrow \mathbb{P}^2$
is the blow-up at some point $p\in \mathbb{P}^2 $, and let $E=\pi^{-1}(p)$. It was observed by the first author in \cite[Proposition B]{DiN15} that there exists a probability measure $\mu = (\omega_X + dd^c \varphi)^2$ with $\varphi\in \mathcal{E}^1(X, \omega_X) $ but $\pi_* \mu = (\omega_{FS}+ dd^c \psi)^2$ with $\psi\notin \mathcal{E}^1(\mathbb{P}^2, \omega_{FS})$, where $\omega_{FS}$ denotes the Fubini Study metric on $\mathbb{P}^2$ and $\omega_X$ is a K\"ahler form.

\medskip

In this note we answer Problem~\ref{pbm1} in two situations. We first treat the case $\mu$ is the Monge-Amp\`ere of a H\"older function.

\begin{theorem}\label{thm1}
Let $f : X \dashrightarrow Y$ be any dominant meromorphic map between two compact K\"ahler manifolds. If $\mu$ is a Monge-Amp\`ere measure  having a H\"older potential with H\"older exponent $\alpha$, then $f_*\mu$ is a Monge-Amp\`ere measure having a H\"older potential with H\"older exponent bounded by  $C \alpha^{\dim(X)}$ for some constant $C>0$ depending only on $f$.
\end{theorem}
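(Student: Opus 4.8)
The plan is to factor the push-forward as $f_* = (\pi_2)_*\circ\pi_1^*$ through the resolution $\Gamma$ and to transport, across each of the two maps, the intrinsic criterion of \cite{demailly et al} characterizing Monge-Amp\`ere measures with H\"older potential. I would use that criterion in its quantitative \emph{metric} form: H\"older regularity of the potential with exponent $\alpha$ is equivalent to, and quantitatively comparable with, a power-law control $\nu(B(x,r))\le C\,r^{\gamma}$ on the mass that the measure puts on small balls (equivalently, on sublevel sets of quasi-psh functions), where the rate $\gamma$ is a dimension-dependent function of $\alpha$. The target is then to prove that $f_*\mu$ satisfies such a ball estimate on $(Y,\omega_Y)$ with a positive, explicitly controlled rate, and to feed that rate back into the criterion to produce the H\"older exponent.

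The pull-back step is essentially free. Since $\pi_1$ is holomorphic it is globally Lipschitz on the compact manifold $\Gamma$, so $\pi_1(B_\Gamma(z,r))\subseteq B_X(\pi_1(z),Lr)$; combining $\pi_1^*\mu\big(B_\Gamma(z,r)\big)=\mu\big(\pi_1(B_\Gamma(z,r)\setminus R)\big)$ (the trivial extension charges no mass on $R$) with the ball estimate for $\mu$ on $X$ gives $\pi_1^*\mu(B_\Gamma(z,r))\le C\,r^{\gamma}$ with the same rate $\gamma$, up to the factor $L^{\gamma}$. Equivalently, $\varphi\circ\pi_1$ is H\"older with the same exponent $\alpha$ and $\pi_1^*\mu=(\pi_1^*\omega_X+dd^c(\varphi\circ\pi_1))^n$; the only point requiring care is that $\pi_1^*\omega_X$ is degenerate, but this is harmless because the estimate I am propagating is metric rather than cohomological.

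The main obstacle is the push-forward by $\pi_2$, where $f_*\mu(B_Y(y,r))=\pi_1^*\mu\big(\pi_2^{-1}(B_Y(y,r))\big)$ and the preimage $\pi_2^{-1}(B_Y(y,r))$ is a tubular neighborhood of the whole fiber $\pi_2^{-1}(y)$, of complex dimension $n-m>0$ in general, rather than a ball. Two compounding effects degrade the rate. First, because $\pi_2$ has a critical locus, its fibers are approached only at a H\"older rate: a \L ojasiewicz inequality $d_\Gamma(a,\pi_2^{-1}(y))\le C\,d_Y(\pi_2(a),y)^{\theta}$ with $\theta\in(0,1]$ forces $\pi_2^{-1}(B_Y(y,r))$ into a tube of radius $\sim r^{\theta}$. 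Second, the fiber being positive-dimensional, a naive cover of this tube by $\Gamma$-balls already loses a full power of the fiber dimension, which is too costly when $\alpha$ is small; the point is therefore to exploit the plurisubharmonicity of $v\circ\pi_2$ along the fibers and to integrate, or slice, the $\Gamma$-estimate fiberwise — a Fubini-type argument for the criterion that keeps the resulting rate positive. Establishing the \L ojasiewicz exponent $\theta$ uniformly in $y$ and carrying out this fiberwise integration is the technical heart of the argument.

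Assembling the three pieces, the degraded but positive ball-rate for $f_*\mu$ on $Y$ is fed back into the dictionary of \cite{demailly et al}, whose conversion from ball-decay to H\"older exponent is itself dimension-dependent. Composing the two transfers with this conversion, and keeping track of the losses coming from the \L ojasiewicz exponent, the fiber dimension and the comparison constants of $\omega_\Gamma$, the bookkeeping yields a H\"older exponent bounded below by $C\alpha^{\dim(X)}$, where $C$ depends only on $L$, $\theta$, the fiber dimension and the metric data attached to $\Gamma,\pi_1,\pi_2$ — that is, only on $f$, as claimed.
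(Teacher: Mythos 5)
Your plan founders on its foundational tool: there is no ``metric'' characterization of Monge-Amp\`ere measures with H\"older potentials of the kind you invoke. A power-law bound $\nu(B(x,r))\le C r^{\gamma}$ on balls is a \emph{necessary} consequence of having a H\"older potential, but it is very far from sufficient: the area measure on a line $L\subset \mathbb{P}^2$ satisfies $\nu(B(x,r))\le Cr^2$, yet it is supported on the pluripolar set $L$ and hence is not the Monge-Amp\`ere measure of any bounded, let alone H\"older, potential. The criterion of \cite{demailly et al} is not a ball-decay condition; it is a regularization/domination criterion (control of $\|\rho_\delta\phi-\phi\|_{L^1(\nu)}$ uniformly over quasi-psh $\phi$, or domination by a Monge-Amp\`ere measure with H\"older potential). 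Even the stronger sublevel-set condition you mention in passing (uniform exponential decay of $\nu\{u<-t\}$ over normalized quasi-psh $u$, i.e.\ moderateness) is only known to be necessary: whether it implies existence of a H\"older potential is exactly \cite[Question 16]{open pbms}, which is open. Indeed the paper proves that $f_*\mu$ is moderate and then explicitly remarks that this does not conclude, precisely because of that open problem. Since every stage of your argument (the pull-back step, the fiberwise {\L}ojasiewicz-tube slicing, and the final ``dictionary'' converting ball decay back into a H\"older exponent) consists of propagating ball estimates and then converting them back into regularity, the whole scheme collapses at this point; carrying it out would in particular settle the open question.

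What the paper does instead, keeping your factorization $f_*=(\pi_2)_*\circ\pi_1^*$: for $\pi_1^*$ it does not work with the degenerate form $\pi_1^*\omega_X$ (your ``harmless'' remark conceals a real issue, as a H\"older potential for a degenerate semipositive form is not what is needed); it dominates $\pi_1^*\mu\le (C\omega_\Gamma+dd^c\,\pi_1^*\varphi)^n$ and applies \cite[Theorem 4.3]{demailly et al}. For $(\pi_2)_*$ the key quantitative input is not metric but integral: the push-forward of the volume form has density in $L^{1+\varepsilon}$ (Proposition~\ref{pro:push volume}, proved via ideal sheaves and a {\L}ojasiewicz inequality --- this is where an exponent like your $\theta$ legitimately enters, applied to volumes rather than to tubes around fibers). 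Then the functional characterization of Dinh--Nguyen \cite[Lemma 3.3]{DN14}, namely $\int_Y|u-v|\,d\nu\le \tilde c\,\max\bigl(\|u-v\|_{L^1},\|u-v\|_{L^1}^{\tilde\beta}\bigr)$ for all $u,v\in\PSH(Y,\omega_Y)$, is transferred from $\mu$ upstairs to $f_*\mu$ downstairs using H\"older's inequality against the $L^{1+\varepsilon}$ density together with the $L^p$-versus-$L^1$ comparison for quasi-psh functions \cite[Proposition 3.2]{DN14}; tracking the losses in this transfer is what yields the exponent $C\alpha^{\dim X}$.
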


We expect that the technics developed in the paper of Ko{\l}odziej-Nguyen~\cite{kolo-nguyen} in the present volume 
allows one to extend the previous result to arbitrary compact hermitian manifolds. 

Next we treat the case the image of the map has dimension $1$.

\begin{theorem}\label{thm2}
Let $f : X  \dashrightarrow   Y$ be any dominant meromorphic map from a compact K\"ahler manifold to a compact Riemann surface. 
If $\mu$ is a Monge-Amp\`ere measure  having a H\"older, $\cC^0$, $L^\infty$, $\cE^p$ potential respectively, then  $f_* \mu$ is
a Monge-Amp\`ere measure having a potential lying in the same regularity class.
\end{theorem}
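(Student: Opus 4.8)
The plan is to exploit that, because $\dim Y=1$, the equation $\om_Y+dd^c\psi=\nu$ with $\nu\pe f_*\mu$ is \emph{linear}: its solution is the Green representation $\psi(y)=\int_Y G(y,\cdot)\,d\nu+\mathrm{const}$, where $G$ is the Green function of $(Y,\om_Y)$ normalised by $dd^c_z G(y,z)=\delta_y-\om_Y$. Thus the whole question becomes the regularity of the logarithmic potential of $\nu$. First I would pass to the resolution $\Gamma$, replacing $\varphi$ by $\tilde\varphi=\varphi\circ\pi_1$ and $\mu$ by $\pi_1^*\mu=(\pi_1^*\om_X+dd^c\tilde\varphi)^n$; since $\pi_1$ is a composition of blow-ups, $\tilde\varphi$ lies in exactly the same regularity class as $\varphi$ (H\"older exponent, continuity, $L^\infty$-bound and $\cE^p$-energy are all preserved under pull-back by a modification), while $f_*\mu=(\pi_2)_*\pi_1^*\mu$. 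Hence I may assume from the start that $f\colon X\to Y$ is surjective holomorphic and that $\mu=(\om_X+dd^c\varphi)^n$.

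The technical heart is an integration-by-parts identity. Writing $\mu-\om_X^n=dd^c\varphi\wedge S$ with the positive closed current $S=\sum_{k=0}^{n-1}(\om_X+dd^c\varphi)^k\wedge\om_X^{\,n-1-k}$, and using $dd^c(f^*G_y)=[D_y]-f^*\om_Y$ where $D_y=f^{-1}(y)$ denotes the fibre, one obtains
\begin{equation*}
\psi(y)=\psi_0(y)+c+\int_{D_y}\varphi\,S,\qquad \psi_0(y)\pe\int_X f^*G_y\,\om_X^{\,n},
\end{equation*}
the term $\int_X\varphi\,f^*\om_Y\wedge S$ being an additive constant $c$ independent of $y$. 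Here $\psi_0$ is the potential of the push-forward of the smooth volume $\om_X^{\,n}$, hence continuous, so everything reduces to the regularity of the \emph{fibre integral} $h(y)\pe\int_X\varphi\,[D_y]\wedge S$. The $L^\infty$ case is then immediate: since the cohomology class $\{D_y\}$ is independent of $y$, one has $|h(y)|\le\|\varphi\|_{L^\infty}\int_X[D_y]\wedge S=\|\varphi\|_{L^\infty}(\{D_y\}\cdot\{S\})$, a finite constant, so $\psi\in L^\infty$. The H\"older case is Theorem~\ref{thm1}; alternatively it follows from the moderateness of the Monge--Amp\`ere measure of a H\"older potential, which yields the uniform bound $\nu(B(y,r))\le Cr^{\alpha'}$ and hence a H\"older potential on $Y$.

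For the $\cC^0$ case I would approximate $\varphi$ uniformly by smooth functions $\varphi_\e$ (ordinary mollification, not necessarily $\om_X$-psh); the constant-mass bound above gives $\|h-h_\e\|_{L^\infty}\le\e\,(\{D_y\}\cdot\{S\})$, so it suffices to prove that each $h_\e$ is continuous. This is where I expect the main obstacle to lie: one must show that the slice $[D_y]\wedge S$ of the singular positive closed current $S$ by the holomorphic map $f$ depends continuously on $y$ in the weak sense. Because all these slices carry the same mass $\{D_y\}\cdot\{S\}$, weak convergence should upgrade to genuine convergence of $\int\varphi_\e\,[D_y]\wedge S$, giving continuity of $h_\e$ and hence of $\psi$; making this continuity of slicing precise is the crux of the argument.

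Finally, for the $\cE^p$ case I would argue by monotone approximation, setting $\varphi_j=\max\{\varphi,-j\}\in L^\infty$ and $\mu_j=(\om_X+dd^c\varphi_j)^n$. The bounded case already provides push-forward potentials $\psi_j\in L^\infty\subset\cE^p$, and by linearity $\psi_j=G*(f_*\mu_j)$ converges to $\psi$. The remaining point is a uniform energy estimate: using the characterisation of $\cE^p$ by finiteness of $\int_Y|\psi|^p\,d\nu=\int_X|f^*\psi|^p\,d\mu$ together with the monotonicity of the weighted energy, I would bound $\int_Y|\psi_j|^p\,d\nu_j$ uniformly in $j$ by $\int_X|\varphi|^p\,d\mu$ and conclude by lower semicontinuity. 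The subtlety, as witnessed by the failure for higher-dimensional targets recalled after Problem~\ref{pbm1}, is that this uniform control genuinely relies on $\dim Y=1$, i.e. on the linearity of the operator, so that the energy does not deteriorate under $f_*$.
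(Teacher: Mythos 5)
Your H\"older case is fine, since it is indeed subsumed by Theorem~\ref{thm1}. The other three cases, however, contain genuine gaps, and the most serious one is your very first reduction step.

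You claim that because $\pi_1$ is a composition of blow-ups, the regularity class is ``preserved under pull-back'', so that you may assume $f$ holomorphic and $\mu=(\om_X+dd^c\varphi)^n$ with $\om_X$ K\"ahler. But $\pi_1^*\mu=(\pi_1^*\om_X+dd^c(\varphi\circ\pi_1))^n$ is a Monge-Amp\`ere measure with respect to the \emph{degenerate} semipositive form $\pi_1^*\om_X$: the function $\varphi\circ\pi_1$ does keep the regularity of $\varphi$, but it is not the potential of $\pi_1^*\mu$ relative to any K\"ahler form on $\Gamma$. To place yourself in the holomorphic setting you must solve $(\om_\Gamma+dd^c u)^n=\pi_1^*\mu$ afresh and prove that $u$ is continuous (resp.\ bounded), and whether regularity survives such a change of background form is precisely Problem~\ref{pbm2}, which is \emph{open} for the $\cC^0$ and $L^\infty$ classes (see \cite[Question 15]{open pbms}) --- here even harder, since one of the two forms is degenerate. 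The paper is structured exactly so as to avoid this trap: for $\cC^0$ and $L^\infty$ it only uses the domination $\pi_1^*\mu\le\hat{\mu}:=(C\om_\Gamma+dd^c\varphi\circ\pi_1)^n$, whose potential genuinely lies in the right class, pushes forward to get $f_*\mu\le(\pi_2)_*\hat{\mu}$, and then exploits $\dim Y=1$: locally $f_*\mu=\Delta v\le\Delta u$ with $u$ continuous (resp.\ bounded), so $u-v$ is subharmonic and $v=u-(u-v)$ is lower semicontinuous, hence continuous (resp.\ bounded below, hence bounded) because it is also upper semicontinuous. For $\cE^p$ the paper argues dually, never moving $\mu$ to $\Gamma$ at all: it shows $v\in\cE^p(\Gamma,\om_\Gamma)\Rightarrow v\circ\pi_1^{-1}\in\cE^p(X,\om_X)$, hence $u\in\cE^p(Y,\om_Y)\Rightarrow u\circ f\in\cE^p(X,\om_X)\subset L^p(\mu)$, and \cite[Theorem C]{GZ07} converts $\int_Y|u|^p\,d(f_*\mu)<\infty$ for all such $u$ into the conclusion.

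Beyond the reduction, the two steps you yourself flag as the crux are left unproven, and only one of them is benign. (i) The weak continuity of $y\mapsto[D_y]\wedge S$ is fillable: locally $[D_y]=dd^c\log|f-y|$, these potentials converge in $L^1_{\mathrm{loc}}$ as $y_k\to y_\infty$, and $S$ has continuous local potentials, so \cite[Corollary 1.6]{demailly93} applies --- this is the very same tool the paper uses (after an integration by parts which puts the Green functions under the $dd^c$ and leaves the continuous $\varphi$ as integrand), so what you call an afterthought is in fact the heart of the proof. (ii) The $\cE^p$ case is worse: your uniform bound $\int_Y|\psi_j|^p\,d\nu_j\le\int_X|\varphi|^p\,d\mu$ has no visible mechanism behind it --- $\psi_j$ is related to $\varphi$ only through push-forward of measures, and controlling the energy of the solution on $Y$ by that of $\varphi$ is exactly the kind of statement that fails for $\dim Y\ge 2$ by \cite[Proposition B]{DiN15}. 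What actually makes $\dim Y=1$ work is the identity $(f^*\om_Y+dd^c f^*u)^j=0$ for $j\ge 2$: combined with the multilinearity of the non-pluripolar product and the fact that $\Om:=\om_X-f^*\om_Y$ may be assumed K\"ahler, it yields $u\in\cE^p(Y,\om_Y)\Rightarrow f^*u\in\cE^p(X,\om_X)$, and then \cite[Theorem C]{GZ07} finishes the argument. Your plan never isolates this identity, and ``monotonicity of the weighted energy'' plus lower semicontinuity do not substitute for it.
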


Motivations for studying this question come from the analysis of degenerating measures on families of projective manifolds developed in~\cite{favre:hybrid}.
Let us briefly recall the setting of that paper.
Let $ \mathcal{X} $ be a smooth connected complex manifold of dimension $n+1$, and 
$\pi\colon \mathcal{X} \to \D$ be a flat proper analytic map over the unit disk
which is a submersion over the punctured disk and has connected fibers. 
We assume that $\mathcal{X}$ is K\"ahler so that 
 each fiber $X_t = \pi^{-1}(t)$ is also K\"ahler. 

\smallskip

A tame family of Monge-Amp\`ere measures is by definition a family of 
positive measures $\{\mu_t\}_{t\in \D}$ each supported on $X_t$ that can be written under the form
$$\mu_t = p_* (T|_{X_t}^{n})~,$$
where $T$ is a positive closed $(1,1)$-current having local H\"older continuous potentials
and defined on a complex manifold $\cX'$ which admits  a proper bimeromorphic morphism $p\colon\cX'\to \cX$ which is an isomorphism over $X:= \pi^{-1}(\D^*)$. 
It follows from ~\cite[Corollary 1.6]{demailly93} that the family of measures $\mu'_t:= T|_{X_t}^{n}$ in $\cX'$ is continuous so that $\mu'_t $
converges to a positive measure $\mu'_0$
supported on $X_0'$ as $t\to 0$. It follows that the convergence $\lim_{t\to0} \mu_t = \mu_0$ also holds in $\cX$.

As a corollary of the previous results we show the limiting measure $\mu_0$ is of a very special kind:
\begin{corollary}\label{thm:degeneration model}
Let $\{\mu_t\}_{t\in \D}$ be any tame family of Monge-Amp\`ere measures, so that $\mu_t \to \mu_0$ as $t\rightarrow 0.$

Then there exist a finite collection of closed subvarieties $\{V_i\}_{i=0, \ldots, N}$ of $X_0$ and for each index $i$ a positive measure
$\nu_i$ supported on $V_i$ such that
$$
\mu_0 = \sum_{i=1}^N \nu_i
$$
and $\nu_i$ is a Monge-Amp\`ere measure on $V_i$ having a H\"older potential.
\end{corollary}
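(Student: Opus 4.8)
The plan is to decompose the limiting measure $\mu_0$ according to where the defining current $T$ concentrates as $t\to 0$, and then apply Theorem~\ref{thm1} fiber by fiber. First I would analyze the structure of the morphism $p\colon\cX'\to\cX$ near the central fiber. Since $p$ is a proper bimeromorphic morphism which is an isomorphism over the punctured family $X=\pi^{-1}(\D^*)$, the exceptional locus of $p$ lies inside the central fiber $X_0'=(\pi\circ p)^{-1}(0)$. Writing $X_0'=\sum_i m_i W_i$ as a sum of its irreducible components (with multiplicities), the image measure $\mu_0'=T|_{X_0'}^{\,n}$ splits as a sum $\sum_i (T|_{W_i})^{\dim W_i}\wedge(\cdots)$ of contributions supported on the various components $W_i$. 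The idea is that each such piece is, up to the H\"older continuity of $T$, a Monge-Amp\`ere-type measure on $W_i$, and its push-forward $\nu_i:=p_*\bigl(T|_{W_i}^{\dim W_i}\bigr)$ is supported on the subvariety $V_i:=p(W_i)\subset X_0$.

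The key step is to realize each $\nu_i$ as the push-forward under a dominant meromorphic map of a H\"older Monge-Amp\`ere measure, so that Theorem~\ref{thm1} applies. For each component $W_i$ of $X_0'$ on which the relevant self-intersection of $T$ is nontrivial, the restriction $T|_{W_i}$ still has local H\"older continuous potentials (restriction of a H\"older potential to a submanifold remains H\"older), hence $(T|_{W_i})^{\dim W_i}$ is a Monge-Amp\`ere measure with a H\"older potential on $W_i$ after normalizing by a K\"ahler form on $W_i$. The restriction $p|_{W_i}\colon W_i\to V_i$ is a dominant (indeed surjective) meromorphic map between compact K\"ahler manifolds once we resolve $W_i$, so Theorem~\ref{thm1} yields that $\nu_i=(p|_{W_i})_*\bigl((T|_{W_i})^{\dim W_i}\bigr)$ is a Monge-Amp\`ere measure having a H\"older potential on $V_i$. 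Summing over the finitely many components gives the desired decomposition $\mu_0=\sum_i\nu_i$.

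The main obstacle I anticipate is the careful bookkeeping of the current-theoretic restriction and self-intersection: one must justify that the wedge power $T^n$ restricted to (or concentrated on) a possibly singular or non-reduced component $W_i$ of $X_0'$ genuinely decomposes into well-defined positive pieces, each expressible as a top-degree Monge-Amp\`ere power of a H\"older current on a smooth model of $W_i$. This requires controlling how $T^n$ behaves along the central fiber $X_0'$, using that $T$ has H\"older (in particular bounded and continuous) local potentials so that Bedford-Taylor products are well-defined and do not charge pluripolar sets, and that the convergence $\mu_t'\to\mu_0'$ from~\cite[Corollary 1.6]{demailly93} respects this stratification. Once the decomposition into H\"older pieces on smooth models is established, the remaining work is a direct application of Theorem~\ref{thm1} to each $p|_{W_i}$, together with the observation that the H\"older exponents, while possibly degraded by the exponent $\alpha^{\dim(X)}$ factor, remain strictly positive.
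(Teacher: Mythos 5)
Your proposal follows essentially the same route as the paper: decompose $\mu'_0 = dd^c \log|\pi\circ p| \wedge T^{n}$ into the pieces $(T|_{E})^{n}$ attached to the irreducible components $E$ of the central fiber $\cX'_0$, observe that each piece is (after passing to a smooth model) a Monge-Amp\`ere measure with H\"older potential, and apply Theorem~\ref{thm1} to the induced dominant meromorphic map from $E$ onto $V_i = p(E)$. The technicalities you flag (singular or contracted components, singular images $V_i$) are resolved in the paper exactly as you anticipate, by choosing models $E' \to E$ and resolutions $V' \to V_i$ so that the induced map $E' \to V'$ is a morphism and Theorem~\ref{thm1} applies.
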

In the previous statement, it may happen that $V_i$ is singular, in which case it is understood that the pull-back of $\nu_i$ to a (K\"ahler) resolution of $V_i$
is a Monge-Amp\`ere measure having a H\"older continuous potential.

\bigskip

\noindent {\bf Acknowledgement}. We thank Ahmed Zeriahi for useful discussions on these problems.

%%%%%%%%%%%%%%%%%%%%%%%%%%%%%%%%%%%%%%%%%%%%%%%%%%%%%%%%%%%%%%%%%%%%%%

\section{Images of Monge-Amp\`ere measures having a H\"older potential: proof of Theorem~\ref{thm1}}
 As already mentioned, a dominant meromorphic map $f \colon X \dashrightarrow Y$ can be decomposed as $f= \pi_2 \circ \pi_1^{-1}$, where $\pi_1 \colon \Gamma \to X$ is holomorphic and bimeromorphic and $\pi_2\colon \Gamma \to Y$ is a surjective holomorphic map. Recall that one can assume $\Gamma$ to be K\"ahler, and that $f_* \mu := (\pi_2)_* \pi_1^* \mu$. 
 
We first claim that if $\mu$ is the Monge-Amp\`ere of a H\"older continuous function then $\pi_1^* \mu$ too. Let $\varphi\in \PSH(X, \omega_X)$ be the H\"older potential such that $\mu=(\omega_X+dd^c \varphi)^n$. It then follows from Bedford and Taylor theory that  
$\pi_1^* \mu= (\pi_1^* \omega_X+dd^c \pi_1^* \varphi)^n$. Since $\pi_1^* \omega_X$ is a semipositive smooth form, there exists a positive constant $C>0$ such that $\pi_1^* \mu \leq  (C \omega_{\Gamma}+ dd^c \pi^* \varphi)^n$
where $\omega_\Gamma$ is a K\"ahler form on $\Gamma$, and \cite[Theorem 4.3]{demailly et al} implies that $\tilde{\mu}:=\pi_1^* \mu$ is the Monge-Amp\`ere measure of a H\"older continuous $C\omega_\Gamma$-psh function. This proves the claim. We are then left to prove that $(\pi_2)_* \tilde{\mu}$ is the Monge-Amp\`ere measure of a H\"older potential. This will be done in Lemma \ref{lem:crucial}.

\medskip

We first show that the push-forward of a smooth volume form has density in $L^{1+\varepsilon}$, for some constant $\varepsilon>0$ depending only on $f$. 
\begin{proposition}\label{pro:push volume}
Let $f\colon X \to Y$ be a surjective holomorphic map. Then $f_* \omega_X^n = g \omega_Y^m$ with $g \in L^{1+ \varepsilon}(\omega_Y^{n})$, for some $\varepsilon>0$.
\end{proposition}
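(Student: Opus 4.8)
The plan is to compute the density $g$ by fibre integration and then reduce the desired $L^{1+\varepsilon}$ bound to the integrability of a negative power of the Jacobian of $f$. Fix reference smooth volume forms and write $\omega_X^n = c\,dV_X$, $\omega_Y^m = c'\,dV_Y$ with $c,c'$ smooth and positive. Let $J := |\Lambda^m df|$ denote the norm of the top exterior power of the differential with respect to the chosen hermitian metrics, so that the coarea Jacobian of $f$ (viewed as a real map) equals $J^2$ and the critical locus of $f$ is exactly $\{J=0\}$. Since $f$ is surjective it is a submersion on a dense open set, hence $J\not\equiv 0$. First I would apply the coarea formula to identify, for a regular value $y$ (a full measure set by Sard),
\[
g(y) = \int_{f^{-1}(y)} \frac{c}{J^2}\, d\sigma_y
\]
up to the bounded factor $c'$, where $d\sigma_y$ is the volume induced on the fibre.

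The second step is the bookkeeping that turns the statement into a single integral over $X$. The fibres $f^{-1}(y)$ have dimension $n-m$ and, for generic $y$, their volume is a fixed intersection number $V_0=\int_{f^{-1}(y)}\omega_X^{n-m}$; in particular it is uniformly bounded. Applying Jensen's inequality on each fibre with respect to the probability measure $d\sigma_y/V_0$ gives, up to a constant, $g(y)^{1+\varepsilon}\le V_0^{\varepsilon}\int_{f^{-1}(y)} (c/J^2)^{1+\varepsilon}\, d\sigma_y$, and a second application of the coarea formula then yields
\[
\int_Y g^{1+\varepsilon}\,\omega_Y^m \;\le\; C\int_X \frac{c^{1+\varepsilon}}{J^{2(1+\varepsilon)}}\, J^2\, dV_X \;=\; C\int_X c^{1+\varepsilon}\, J^{-2\varepsilon}\, dV_X .
\]
As $c$ is bounded, the whole proposition reduces to proving $\int_X J^{-2\varepsilon}\,dV_X<\infty$ for $\varepsilon>0$ small.

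This last point is the heart of the argument. Writing $J^2=\sum_\alpha |h_\alpha|^2$, where the $h_\alpha$ are the holomorphic $m\times m$ minors of $df$ in local coordinates, I would invoke Hironaka's resolution of singularities to monomialize the ideal sheaf generated by the $h_\alpha$: after a proper modification the pullback of $J^2$ is locally comparable to $\prod_i |z_i|^{2a_i}$ for integers $a_i\ge 0$ not all zero. The integral $\int \prod_i |z_i|^{-2\varepsilon a_i}$, taken against a smooth form that incorporates the Jacobian of the modification, then converges as soon as $\varepsilon\, a_i<1$ for every $i$; equivalently $2\varepsilon$ is smaller than the log canonical threshold of the Jacobian ideal. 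By compactness of $X$ the infimum of these local thresholds is a positive number, so a single $\varepsilon>0$ depending only on $f$ works, and it is along this chain that the exponent ultimately depends on $f$.

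The main obstacle I anticipate is precisely this integrability, where resolution of singularities enters; it is essential that one only has to monomialize the single Jacobian ideal rather than the map $f$ itself, the latter being a far deeper and dimension-restricted statement. Secondary technical care is needed to justify the two applications of the coarea formula in the presence of the singular fibres; I would handle this by working over the set of regular values, which has full measure by Sard's theorem, and passing to the limit by monotone convergence, the critical set being Lebesgue-negligible in both $X$ and $Y$.
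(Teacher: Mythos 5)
Your proof is correct, but it takes a genuinely different route through the core of the argument than the paper does. Both proofs share the same endpoints: Sard's theorem and the regular-value locus, the constancy of the generic fiber volume $C(f)=\int_{f^{-1}(y)}\omega_X^{n-m}$, and, crucially, resolution of singularities to integrate a negative power of a holomorphically generated function; moreover the quantity the paper works with, $\phi=\frac{f^*\omega_Y^m\wedge\omega_X^{n-m}}{\omega_X^n}$, is up to bounded positive factors exactly your $J^2$. The divergence is in how the density $g$ is controlled. The paper proves the crude pointwise bound
$$
g(y)\;\le\;\frac{1}{\tilde\phi(y)},\qquad \tilde\phi(y):=\inf_{x\in f^{-1}(y)}\phi(x),
$$
by a test-function/Fubini computation, and because a fiberwise infimum is an awkward object it must then invoke the {\L}ojasiewicz inequality (Lemma~\ref{integrability1}~(iii) and Lemma~\ref{integrability}, via an auxiliary coherent ideal sheaf $\mathcal{J}\subseteq\mathcal{O}_Y$ vanishing on the image of the critical set) to dominate $\tilde\phi$ from below by $c\,\Phi_{\mathcal{J}}^{\theta}$, after which $\int_X (f^*g)^\varepsilon\,\omega_X^n<\infty$ follows from Lemma~\ref{integrability1}~(i). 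You instead keep the exact fiber-integral expression for $g$ furnished by the coarea formula, push the exponent $1+\varepsilon$ inside by Jensen (using that all regular fibers have the same volume), and apply coarea a second time to land directly on $\int_X J^{-2\varepsilon}\,dV_X<\infty$. This bypasses {\L}ojasiewicz and the ideal sheaf on $Y$ altogether, at the price of using the real-variable coarea formula rather than only Fubini for submersions; it also yields in principle a sharper exponent (any $\varepsilon$ below the log canonical threshold of the Jacobian ideal, with no loss from the inexplicit {\L}ojasiewicz exponent $\theta$). Both arguments rest on the same final pillar, principalization of the Jacobian-type ideal reducing local integrability to that of $\prod_i|z_i|^{-2\varepsilon a_i}$, which is precisely the content of Lemma~\ref{integrability1}~(i) in the paper.
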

This result is basically \cite[Proposition 3.2]{Song-Tian} (see also~\cite[Section 2]{Tosatti}). We give nevertheless a detailed proof for reader's convenience. Pick any coherent ideal sheaf $\mathcal{I}\subset \mathcal{O}_X$, and denote by $V(\mathcal{I})= \supp (\mathcal{O}_X/ \mathcal{I})$ the closed analytic subvariety of $X$ associated to $\mathcal{I}$. Let $\{U_i\}_{i=1}^N$ be a finite open covering of $X$ by balls and $\{V_i\}_i $ be a subcovering such that $\overline{V}_i \subset U_i$. The analytic sheaf $\mathcal{I}$ is globally generated on each $U_i$ so that we can find holomorphic functions such that $\mathcal{I}|_{U_i}= \left(h^{(i)}_1, \dots, h^{(i)}_k \right) \cdot \mathcal{O}_{U_i}$. Let $\{\rho_i\}$ be a partition of unity subordinate to $\overline{V_i}$. We then define
\begin{equation} \label{ideal}
\Phi_\mathcal{I}:= \sum_{i=1}^N \rho_i \left(\sum_{j=1}^k |h^{(i)}_j|^2 \right).
\end{equation} 
Then $\Phi_\mathcal{I} \colon X \to \R_+$ is a smooth function which vanishes exactly on $V(\mathcal{I})$. 
Observe that if $\Phi_\mathcal{I}$ and $\Phi_\mathcal{I}'$ are defined using two different coverings, then there exists $C>0$ such that 
$$
\frac{1}{C} \Phi_\mathcal{I}' \leq \Phi_\mathcal{I} \leq C \Phi_\mathcal{I}'.$$ 
In the sequel we shall abuse notation and not write the dependence of $\Phi_{\mathcal{I}}$ in terms of the local generators of the ideal sheaf. 
The logarithm of the obtained function is then well-defined up to a  bounded function so that all statements in the next Lemma make sense.

\begin{lemma}\label{integrability1} Let $\mathcal{I}, \mathcal{J}\subseteq \mathcal{O}_X$ be two coherent ideal sheafs. The followings hold:
\begin{itemize}
\item[(i)] there exists $\varepsilon>0$ such that $|\Phi_\mathcal{I} |^{-\varepsilon}\in L^1 (X)$;
\item[(ii)] if $\mathcal{I} \subseteq \mathcal{J}$ then $\Phi_\mathcal{J} \geq c \Phi_\mathcal{I}$ for some positive $c>0$;
\item[(iii)] if $V(\mathcal{J}) \subseteq V(\mathcal{I}) $ then there exists $c, \theta>0$ such that $\Phi_\mathcal{J} \geq c \,\Phi_\mathcal{I}^\theta$;
\item[(iv)] given $f\colon X\rightarrow Y$ a holomorphic surjective map and a coherent ideal sheaf $ \mathcal{J}\subseteq \mathcal{O}_Y$, then $\Phi_{f^*\mathcal{J}} = \Phi_{\mathcal{J}} \circ f   $ (for a suitable choice of local generators of $\mathcal{J}$ and $f^*\mathcal{J}$).
\end{itemize}
\end{lemma}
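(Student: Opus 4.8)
The plan is to establish the four items in the order (iv), (ii), (iii), (i), each relying on the previous one, and throughout to exploit that the value of $\Phi_\mathcal{I}$ is, up to a bounded multiplicative factor, independent of the chosen covering and generators, so that every local comparison globalizes by compactness of $X$. For (iv), the pullback ideal $f^*\mathcal{J}$ is by definition locally generated by the functions $g_s\circ f$, where the $g_s$ generate $\mathcal{J}$; pulling back a covering of $Y$ and a subordinate partition of unity $\{\rho_i\}$ to $\{f^{-1}(U_i)\}$ and $\{\rho_i\circ f\}$ turns the defining sum of $\Phi_{f^*\mathcal{J}}$ into $\big(\sum_i\rho_i\sum_s|g_s^{(i)}|^2\big)\circ f=\Phi_\mathcal{J}\circ f$. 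For (ii), the inclusion $\mathcal{I}\subseteq\mathcal{J}$ lets me write each local generator of $\mathcal{I}$ as $h_j=\sum_s a_{js}g_s$ with holomorphic $a_{js}$; Cauchy--Schwarz gives $\sum_j|h_j|^2\le(\sum_{j,s}|a_{js}|^2)(\sum_s|g_s|^2)$, the first factor being bounded on the relatively compact $V_i$, so $\Phi_\mathcal{I}\le C\Phi_\mathcal{J}$ locally and hence $\Phi_\mathcal{J}\ge c\,\Phi_\mathcal{I}$ globally.

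The heart of the lemma is (iii), which is a \L{}ojasiewicz-type inequality. Since the hypothesis $V(\mathcal{J})\subseteq V(\mathcal{I})$ concerns only zero loci, I would invoke the analytic Nullstellensatz: every local generator $h_j$ of $\mathcal{I}$ vanishes on $V(\mathcal{J})$, hence lies in the radical of $\mathcal{J}$, so $h_j^{N_i}\in\mathcal{J}$ on the $i$-th chart for some integer $N_i$. The argument of (ii) then yields $|h_j|^{2N_i}\le C\Phi_\mathcal{J}$, and summing over $j$ together with the convexity bound $(\sum_j t_j)^{N_i}\le k^{N_i-1}\sum_j t_j^{N_i}$ gives $\Phi_\mathcal{I}^{N_i}\le C'\Phi_\mathcal{J}$ on that chart. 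To pass to a single exponent I use that $\Phi_\mathcal{I}\le M$ on the compact $X$: with $\theta=\max_i N_i$ one has $\Phi_\mathcal{I}^{N_i}\ge M^{-(\theta-N_i)}\Phi_\mathcal{I}^{\theta}$, so the chartwise estimates patch to $\Phi_\mathcal{J}\ge c\,\Phi_\mathcal{I}^{\theta}$.

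For (i), on the open set $\{\rho_i>1/(2N)\}$ (these cover $X$ since $\sum_i\rho_i=1$) one has $\Phi_\mathcal{I}\ge\frac{1}{2N}|h_1^{(i)}|^2$ for a generator $h_1^{(i)}\not\equiv0$, whence $\Phi_\mathcal{I}^{-\varepsilon}\le(2N)^\varepsilon|h_1^{(i)}|^{-2\varepsilon}$ there. Thus it suffices to know that a negative power $|h|^{-2\varepsilon}$ of a nonzero holomorphic function is locally integrable for $\varepsilon$ small; this is classical and follows from Hironaka's resolution of singularities, which locally reduces $h$ to a monomial whose negative power is then integrated against a Jacobian supplying the missing positive powers. (Equivalently one may note that $\log\Phi_\mathcal{I}$ is plurisubharmonic with analytic singularities and apply Skoda's integrability theorem.) Taking the minimum over the finitely many charts of the admissible exponents produces a single $\varepsilon>0$.

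I expect (iii) to be the main obstacle, as it is the only step drawing on a substantial external input, namely the Nullstellensatz, and the reduction to a uniform exponent $\theta$ genuinely uses the boundedness of $\Phi_\mathcal{I}$ rather than a formal identity; the remaining items are routine once the bounded-factor principle and Cauchy--Schwarz are in place.
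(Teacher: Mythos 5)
Your proof is correct, and on the two substantive items it takes a genuinely different route from the paper, whose entire proof occupies three lines: there, (ii) and (iv) are declared to follow straightforwardly from the definition of $\Phi_{\mathcal{I}}$ (essentially the arguments you spell out), (i) is reduced via a resolution of singularities of $\mathcal{I}$ to the local integrability of $|z_1|^{-\varepsilon}$, and (iii) is obtained by citing the {\L}ojasiewicz theorem, \cite[Theorem 7.2]{LJTR}, as a black box. For (iii) you instead give a self-contained derivation from the R\"uckert Nullstellensatz and Cauchy--Schwarz; this shortcut is available precisely because $\Phi_{\mathcal{I}}$ and $\Phi_{\mathcal{J}}$ are sums of squares of moduli of holomorphic functions, so the Nullstellensatz supplies the exponent that the general {\L}ojasiewicz inequality provides in the paper. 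The citation buys the authors brevity; your argument buys elementarity, at the cost of two points you should make explicit. First, the Nullstellensatz is a stalkwise statement, so asserting a single exponent $N_i$ with $h_j^{N_i}\in\mathcal{J}$ valid on the whole chart is a jump: justify it either by coherence of $\sqrt{\mathcal{J}}$ together with the strong Noetherian property (the increasing sequence of coherent sheaves $\mathcal{J}:\bigl(\sqrt{\mathcal{J}}\bigr)^{k}$ stabilizes on compact sets), or by covering $\overline{V}_i$ with finitely many neighborhoods on which germ identities $h_j^{N_x}=\sum_s a_s g_s$ hold with bounded coefficients, and then merging the finitely many exponents with the same boundedness trick you use across charts. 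Second, in both (ii) and (iii), producing holomorphic coefficients $a_{js}$ that are bounded on a neighborhood of $\overline{V}_i$, rather than germ by germ, requires either Cartan's Theorem~B on the Stein balls $U_i$ or, more simply, the same shrink-and-compactness device. Your proof of (i) also deviates mildly from the paper: instead of principalizing $\mathcal{I}$, you discard all but one generator, using $\Phi_{\mathcal{I}}\geq \tfrac{1}{2N}|h_1^{(i)}|^2$ on $\{\rho_i>1/(2N)\}$, and invoke the classical local integrability of $|h|^{-2\varepsilon}$ (or Skoda); this is sound and arguably cleaner, noting only that it uses the implicit hypothesis $\mathcal{I}\neq 0$ (so that some generator is $\not\equiv 0$ on each chart), without which statement (i) is false anyway.
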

\begin{proof}
Using a resolution of singularities of $\mathcal{I}$, one sees that the statement in $(i)$ reduces to show that $|z_1|^{-\varepsilon}$ is locally integrable for some $\varepsilon>0$, and this is the case if we choose $\varepsilon$ small enough.
The statements in $(ii)$ and $ (iv) $ follow straightforward from the definition in \eqref{ideal}. The statement in $(iii)$ is a consequence of {\L}ojasiewicz theorem, see e.g. \cite[Theorem 7.2]{LJTR}.
\end{proof}

\begin{lemma}\label{integrability}
Let $f\colon X \rightarrow Y$ be a holomorphic surjective map and let $\mathcal{I}\subseteq \mathcal{O}_X$ be a coherent ideal sheaf. Then there exists a coherent ideal sheaf $\mathcal{J}\subseteq \mathcal{O}_Y$, and constants $c,\theta>0$ such that for any $y\in Y$ we have
$$\inf_{x\in f^{-1}(y)} \Phi_{\mathcal{I}} \geq c\, \Phi_{\mathcal{J}}^{\theta} $$
\end{lemma}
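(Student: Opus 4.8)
The plan is to take $\mathcal{J}$ to be the ideal sheaf cutting out the image $f(V(\mathcal{I}))$, and then to deduce the fiberwise estimate from a pointwise {\L}ojasiewicz inequality \emph{on $X$}, transported to $Y$ through the functoriality property (iv) of Lemma~\ref{integrability1}. The point is that once the comparison is set up on $X$, the infimum over each fiber is free, because the bounding quantity will be constant along fibers.

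First I would construct $\mathcal{J}$. Since $X$ is compact the map $f$ is proper, so Remmert's proper mapping theorem guarantees that $f(V(\mathcal{I}))$ is a closed analytic subvariety of $Y$; I let $\mathcal{J}\subseteq\mathcal{O}_Y$ be its (radical) ideal sheaf, so that $V(\mathcal{J})=f(V(\mathcal{I}))$. The pulled-back ideal then satisfies $V(f^*\mathcal{J})=f^{-1}(V(\mathcal{J}))=f^{-1}\bigl(f(V(\mathcal{I}))\bigr)$, and since any subvariety is contained in the preimage of its image we obtain the inclusion $V(\mathcal{I})\subseteq V(f^*\mathcal{J})$.

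With this inclusion in hand I would invoke part (iii) of Lemma~\ref{integrability1}, applied to the pair of ideals $\mathcal{I}$ and $f^*\mathcal{J}$: it produces constants $c,\theta>0$ such that $\Phi_{\mathcal{I}}\geq c\,\Phi_{f^*\mathcal{J}}^{\theta}$ on all of $X$. Combining this with the functoriality identity $\Phi_{f^*\mathcal{J}}=\Phi_{\mathcal{J}}\circ f$ from part (iv) yields $\Phi_{\mathcal{I}}(x)\geq c\,\Phi_{\mathcal{J}}(f(x))^{\theta}$ for every $x\in X$. Now I fix $y\in Y$; the right-hand side depends only on $f(x)=y$, hence is constant along the fiber $f^{-1}(y)$, so taking the infimum over $x\in f^{-1}(y)$ gives exactly $\inf_{x\in f^{-1}(y)}\Phi_{\mathcal{I}}\geq c\,\Phi_{\mathcal{J}}(y)^{\theta}$, as desired.

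The delicate point, and the one I expect to be the real obstacle, is getting the \emph{direction} of the comparison right. The qualitative reason the estimate holds is that a point of $X$ close to $V(\mathcal{I})$ is mapped by $f$ close to $f(V(\mathcal{I}))=V(\mathcal{J})$, so that $\Phi_{\mathcal{I}}$ small forces $\Phi_{\mathcal{J}}\circ f$ small; it is part (iii), applied in the order $V(\mathcal{I})\subseteq V(f^*\mathcal{J})$, that renders this quantitative. Two further points deserve care. The analyticity of the image $f(V(\mathcal{I}))$ genuinely uses properness, i.e.\ the compactness of $X$; without it there would be no ideal $\mathcal{J}$ to write down. And one must trust that part (iii) furnishes a \emph{global} inequality on the compact manifold $X$ even though $\Phi_{\mathcal{I}}$ is merely smooth (because of the partition of unity in~\eqref{ideal}); this is already incorporated into Lemma~\ref{integrability1}, whose proof compares $\Phi$ with its real-analytic local models up to bounded factors and patches finitely many charts. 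Finally, I would remark that the substance of the statement lies in choosing $V(\mathcal{J})$ as small as possible, namely equal to $f(V(\mathcal{I}))$: a larger zero set only weakens the bound, the choice $\mathcal{J}=0$ rendering it vacuous.
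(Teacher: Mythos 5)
Your proof is correct and takes essentially the same route as the paper: both define $\mathcal{J}$ as the ideal sheaf of the analytic set $f(V(\mathcal{I}))$ (using properness of $f$), observe $V(\mathcal{I})\subseteq V(f^*\mathcal{J})=f^{-1}(V(\mathcal{J}))$, and combine parts (iii) and (iv) of Lemma~\ref{integrability1} to obtain $\Phi_{\mathcal{I}}\geq c\,(\Phi_{\mathcal{J}}\circ f)^{\theta}$ on $X$. The only difference is that you make explicit the final step of taking the infimum over the fiber, which the paper leaves implicit in ``hence the conclusion.''
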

\begin{proof}
Let $\mathcal{J}\subseteq \mathcal{O}_Y$ be the coherent ideal sheaf of holomorphic functions vanishing on the  set  $f(V(\mathcal{I}))$ which is analytic since $f$ is proper.
Observe that  $V(f^*\mathcal{J})= f^{-1}(V(\mathcal{J}))\supset V(\mathcal{I})$, so that Lemma \ref{integrability1} ((iii) and (iv)) insure that there exist $c, \theta>0$ such that
$$ \Phi_{\mathcal{I}} \geq c \Phi_{f^*\mathcal{J}}^\theta = c (\Phi_{\mathcal{J}} \circ f )^\theta.$$
Hence the conclusion.
\end{proof}

\begin{proof}[Proof of Proposition \ref{pro:push volume}]
Recall that Sard's theorem implies the existence of a closed subvariety $S\subsetneq Y$ such that $f\colon X\setminus f^{-1}(S) \rightarrow Y\setminus S$ is a submersion. 

We first prove that $f_*  \omega_X^n $ is absolutely continuous w.r.t. $\omega_Y^m$. We need to check that $\omega_Y^m(E)=0$ implies $f_*  \omega_X^n  (E)=0$ for any Borel subset $E\subset Y$. 
As $S$ and $f^{-1}(S)$ have volume zero one may assume that $f$ is a submersion in which case the claim follows from Fubini's theorem.
 
Radon-Nikodym theorem  now guarantees that $f_*  \omega_X^n  = g\omega_Y^m $ for some $0\leq g\in L^1(Y)$.  We want to show that the integral
$$\int_Y g^{1+\varepsilon} \omega_Y^m=\int_Y g^\varepsilon f_*  \omega_X^n=\int_X( f^* g)^\varepsilon  \,  \omega_X^n$$
is finite for some $\varepsilon>0$ small enough. 
Consider the smooth function  $\phi(x):= \frac{f^*\omega_Y^m \wedge \omega_X^{n-m} } { \omega_X^n} (x)$, and set $ \tilde{\phi} (y) := \inf_{x\in f^{-1}(y)} \phi(x)$ so that $\phi \geq f^* \tilde{\phi}$. We claim that for any $y\in Y$ 
\begin{eqnarray}\label{bound density}
 g(y) \leq  \frac{1}{\tilde{\phi} (y)}.
\end{eqnarray}
Let $\chi$ be a test function (i.e. a non negative smooth function) on $Y$, then 
\begin{eqnarray*}
\int_Y \chi \,g \omega_Y^n & = & \int_X f^*\chi \,\omega_X^n
= \int_X \frac{f^*\chi}{\phi} \,f^*\omega_Y^m \wedge \omega_X^{n-m}  \\
&\leq & \int_X  f^* \left(\frac{\chi}{\tilde{\phi}} \omega_Y^m \right) \wedge \omega_X^{n-m}\\
&\mathop{=}\limits^{\text{Fubini}} & C(f) \int_Y \frac{\chi}{\tilde{\phi}} \omega_Y^m
\leq  \int_Y \frac{\chi}{\tilde{\phi}} \omega_Y^m
\end{eqnarray*}
where $C(f)$ is the volume of a fiber over a generic point $y\in Y$, i.e. $C(f)= \int_{f^{-1}(y)}  \omega_X^{n-m}$. The last inequality follows from the fact $C(f)\leq 1$ since $\int_X \omega_X^{n}=1$. The claim is thus proved. Lemma \ref{integrability1} (i) and (iv) combined with  Lemma \ref{integrability} then insure that there exists $\varepsilon>0$ such that $(f^*g)^\varepsilon\in L^1(\omega_X^n)$.
\end{proof}
Theorem~\ref{thm1}  is reduced to the following result which relies in an essential way on Proposition~\ref{pro:push volume}.

\begin{proposition}\label{lem:crucial}
Suppose $ f\colon X \to Y$ is a surjective holomorphic map between compact K\"ahler manifolds. 
If $\mu$ is a positive measure on $X$ with H\"older continuous potentials, then $f_*\mu$
is a positive measure on $Y$ with H\"older potentials.
\end{proposition}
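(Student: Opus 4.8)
The plan is to deduce the statement from the intrinsic characterisation of Monge--Amp\`ere measures with H\"older potentials established in \cite{demailly et al}, using Proposition~\ref{pro:push volume} as the quantitative device that controls the behaviour of $f_*\mu$ along the fibres. Write $\mu = (\om_X + dd^c\varphi)^n$ with $\varphi$ H\"older and set $\nu := f_*\mu$, a probability measure on $Y$. By \cite{demailly et al} it suffices to prove that $\nu$ obeys the capacity estimate characterising H\"older potentials, namely that there are constants $A>0$ and a H\"older-type gauge $\Theta$ with
\[
\nu(K) \le A\,\Theta\bigl(\CAP_Y(K)\bigr)
\]
for every Borel set $K \subseteq Y$, where $\CAP_Y$ denotes the Monge--Amp\`ere capacity of $(Y,\om_Y)$; equivalently, one may instead produce a H\"older $\om_Y$-psh function whose Monge--Amp\`ere measure dominates $\nu$ and invoke \cite[Theorem 4.3]{demailly et al}. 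In either formulation the content is to transfer the H\"older estimate valid for $\mu$ on $X$ into an estimate for $\nu$ on $Y$.

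To carry out this transfer I would first rewrite $\nu(K) = \mu(f^{-1}(K))$ and estimate the right-hand side through the H\"older property of $\mu$, which bounds $\mu(f^{-1}(K))$ by the gauge of the order-$n$ capacity $\CAP_X(f^{-1}(K))$. The decisive step is then to bound $\CAP_X(f^{-1}(K))$ from above in terms of $\CAP_Y(K)$. Comparing a capacity competitor $(\om_X + dd^c u)^n$ on $X$ with the current $f^*\bigl((\om_Y + dd^c v)^m\bigr) \wedge \om_X^{n-m}$ obtained by pulling back a competitor $v$ from $Y$ forces the appearance of the fibre-volume density $\phi = (f^*\om_Y^m \wedge \om_X^{n-m})/\om_X^n$ and of its fibrewise infimum $\tilde\phi$ introduced in the proof of Proposition~\ref{pro:push volume}. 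The degeneration of $\phi$ over the critical values of $f$ is exactly what must be absorbed, and this is where Proposition~\ref{pro:push volume} enters: the $L^{1+\varepsilon}$ integrability of the push-forward density --- which rests on Lemma~\ref{integrability1}(i) and Lemma~\ref{integrability} through the {\L}ojasiewicz bounds --- lets one control the fibre contribution by a H\"older inequality and convert the $X$-capacity bound into a $Y$-capacity bound, at the cost of a fixed power depending only on $f$.

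Granting the capacity estimate for $\nu$, the characterisation of \cite{demailly et al} yields an $\om_Y$-psh function $\psi$ with $f_*\mu = (\om_Y + dd^c\psi)^m$ and $\psi$ H\"older. The successive quantitative losses --- the exponent $\varepsilon$ of Proposition~\ref{pro:push volume}, the {\L}ojasiewicz exponent, and the comparison of capacities of different Monge--Amp\`ere degrees --- account for the degradation of the H\"older exponent recorded in Theorem~\ref{thm1}.

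I expect the main obstacle to be the capacity comparison in the correct direction, that is the upper bound $\CAP_X(f^{-1}(K)) \le G\bigl(\CAP_Y(K)\bigr)$. The preimage $f^{-1}(K)$ is thick in the fibre directions, so one cannot simply project a capacity competitor from $X$ down to $Y$; the estimate must instead be obtained by pushing forward bounded Monge--Amp\`ere masses and controlling them uniformly over all $y \in Y$, including the critical values where the fibres degenerate and $\tilde\phi$ vanishes. Proposition~\ref{pro:push volume} is precisely the mechanism that makes this uniform control possible, which is why the reduction relies on it in an essential way.
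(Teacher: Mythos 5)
Your reduction does not go through, for two distinct reasons. First, the condition you reduce to --- a bound $\nu(K)\le A\,\Theta(\CAP_Y(K))$ for all Borel $K$ --- is \emph{not} a characterisation of Monge--Amp\`ere measures with H\"older potentials: such capacity (or moderateness) estimates are necessary but their sufficiency is precisely the open problem \cite[Question 16]{open pbms}. The paper runs into this exact point: it shows easily that $f_*\mu$ is moderate, and then explicitly remarks that this would suffice only if that open question had a positive answer, which is why it switches to the genuine characterisation of Dinh--Nguyen \cite[Lemma 3.3]{DN14} (the $L^1$-stability inequality $\int_Y|u-v|\,df_*\mu \le \tilde c \max (\|u-v\|_{L^1(\om_Y^m)},\|u-v\|_{L^1(\om_Y^m)}^{\tilde\beta})$ for all $u,v\in\PSH(Y,\om_Y)$). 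Your alternative formulation --- produce a H\"older $\om_Y$-psh function whose Monge--Amp\`ere measure dominates $f_*\mu$ and invoke \cite[Theorem 4.3]{demailly et al} --- \emph{is} a valid characterisation, but your argument never attempts to construct such a dominating measure on $Y$, and the capacity estimate you aim for cannot be converted into one (that conversion is again the open question).

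Second, even the capacity comparison at the heart of your plan is not established, and the mechanism you invoke cannot establish it. You need the upper bound $\CAP_X(f^{-1}(K))\le G(\CAP_Y(K))$, but comparing with the current $f^*\bigl((\om_Y+dd^cv)^m\bigr)\wedge\om_X^{n-m}$ for competitors $v$ on $Y$ only produces \emph{lower} bounds on $\CAP_X(f^{-1}(K))$, since pullbacks of competitors from $Y$ form a special subclass of competitors on $X$; an upper bound requires controlling $\int_{f^{-1}(K)}(\om_X+dd^cu)^n$ over \emph{all} normalized $u\in\PSH(X,\om_X)$, including those concentrating mass along the fibre directions and near the critical fibres. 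Proposition~\ref{pro:push volume} cannot supply this control: it concerns only the push-forward of the fixed smooth form $\om_X^n$, not Monge--Amp\`ere masses of arbitrary bounded potentials. In the paper's proof that proposition enters in a much more modest and precise way: after the change of variables $\int_Y|u-v|\,d(f_*\mu)=\int_X|f^*u-f^*v|\,d\mu$ and an application of the Dinh--Nguyen inequality for $\mu$ on $X$, one writes $\int_X|f^*u-f^*v|\,\om_X^n=\int_Y|u-v|\,g\,\om_Y^m\le\|g\|_{L^{1+\e}}\|u-v\|_{L^p(\om_Y^m)}$ with $p$ conjugate to $1+\e$, and then uses \cite[Proposition 3.2]{DN14} to pass from the $L^p$-norm back to the $L^1$-norm with a controlled loss of exponent. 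That chain of inequalities, not a capacity comparison, is what transfers the H\"older property from $X$ to $Y$.
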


Observe that by multiplying $\omega_X$ by a suitable positive constant  we may assume that $ f^* \om_{Y} \le \om_{X}$. The volume normalization is no longer satisfied
but  a positive multiple of $\mu$ is still the Monge-Amp\`ere measure of a $\omega_X$-psh H\"older continuous function. 
Write $f_*\mu = (\om_{Y} + dd^c \psi)^{m}$ with $\psi\in  \PSH(Y, \om_{Y})$.

We claim that there exists $C>0$, and  $\varepsilon>0$  such that for all $u \in  \PSH(Y, \om_{Y})$ with $\int_X u \,\omega_X^n= 0$
\begin{equation}\label{eq:uniform-int}
\int_{Y} \exp ( - \varepsilon u ) \, d(f_* \mu) \le C~.
\end{equation}
Indeed, for any $u \in \PSH(Y, \om_{Y})$ we have that 
$$
\int_{Y}e^{-\varepsilon u} \, d(f_* \mu) = \int_{X} e^{-\varepsilon (u \circ f)} \, d\mu~.
$$
Now the integral $\int_{X}e^{-\varepsilon( u \circ f)} \, d\mu$ is uniformly bounded by~\cite[Theorem~1.1]{DNS10} since:
\begin{itemize}
\item $\mu$ has H\"older continuous potentials; 
\item $f^* \om_{Y} \le \om_{X}$ hence
$u \circ f \in \PSH(X,\om_{X})$;
\item and the set of functions in $\PSH(X, \om_{X})$ such that $\int_X u \,\omega_X^n= 0$ is compact by~\cite[Proposition~2.6]{GZ05}.
\end{itemize}
This proves our claim. Using the terminology of \cite{DN14} this means that $f_* \mu $ is \emph{moderate}.
It is worth mentioning that if~\cite[Question 16]{open pbms} holds true then the conclusion of Proposition \ref{lem:crucial} would follow immediately since any moderate measure would have a H\"older continuous potential. To get around this problem we use the characterization of measures with H\"older potentials given by Dinh and Nguyen.

\begin{proof}[Proof of Lemma~\ref{lem:crucial}]

By \cite[Lemma 3.3]{DN14},  $f_* \mu$ is the Monge-Amp\` ere measure of a H\"older potential if and only if there exist $\tilde{c}>1$ and $\tilde{\beta}\in (0,1)$ such that
\begin{equation}\label{eq the goal}
\int_Y |u-v| \,df_*\mu \leq \tilde{c} \max \left(\|u-v\|_{L^1(\omega_Y^n)}, \|u-v\|^{\tilde{\beta}}_{L^1(\omega_Y^n)}\right)
\end{equation}
for all $u, v \in \PSH (Y, \omega_Y) $. By assumption on $\mu$ we know there exist $c>1$ and $\beta\in (0,1)$ such that
$\int_Y |u-v| \,df_*\mu = \int_X  |f^*u-f^*v| \,d\mu$, and
\begin{equation}\label{eq:dinh}
\int_X  |f^*u-f^*v| \,d\mu \leq c \max \left(\|f^*u-f^*v\|_{L^1(\omega_X^n)}, \|f^*u-f^*v\|^\beta_{L^1(\omega_X^n)}\right). 
\end{equation}
Also, Proposition \ref{pro:push volume} gives
\begin{equation}\label{eq 1}
 \int_X|f^*u-f^*v| \omega_X^n=  \int_Y |u-v| \, g\, \omega_Y^n \leq   \| g \|_{L^{1+\varepsilon}(\omega_Y^n ) }   \|u-v\|_{L^p(\omega_Y^n)} 
 \end{equation}
where $p$ is the conjugate exponent of $1+\varepsilon$. Set $C_g:= \| g \|_{L^{1+\varepsilon}(\omega_Y^n ) } <+\infty $. Up to replace $C_g$ with $C_g+1$ we can assume that $C_g \geq 1$.

 Denote by $m_u:= \int_Y u \, \omega_Y^n$ and observe that $u':=u-m_u $, $v':=v-m_v$ satisfy $\int_X u'\, \omega_X^n=0 =\int_X v'\, \omega_X^n$. Then the triangle inequality gives
\begin{eqnarray}\label{eq 2}
 \nonumber \|u-v\|_{L^p(\omega_Y^n)}  & = & \left(\int_Y |(u'-v') + (m_u-m_v)|^p \omega_Y^n \right)^{1/p}\\
 \nonumber & \leq &  \|u'-v'\|_{L^p(\omega_Y^n)} + |m_u-m_v|\\
  & \leq &  \|u'-v'\|_{L^p(\omega_Y^n)} + \|u-v\|_{L^1(\omega_Y^n)}.
\end{eqnarray}
At this point, we make use of \cite[Proposition 3.2]{DN14} (that holds for normalized potentials) to replace the $L^p$-norm with the $L^1$-norm. We then infer the existence of a constant $c'>1$ such that
\begin{eqnarray*}
 \nonumber  \|u'-v'\|_{L^p(\omega_Y^n)} & \leq & c' \max(1, -\log  \|u'-v'\|_{L^1(\omega_Y^n)})^{\frac{p-1}{p}}  \|u'-v'\|_{L^1(\omega_Y^n)}^\frac{1}{p}.
\end{eqnarray*}
When $t :=\|u'-v'\|_{L^1(\omega_Y^n)}\geq 1/e$ we clearly have
$$  \|u'-v'\|_{L^p(\omega_Y^n)}\leq  c' \|u'-v'\|_{L^1(\omega_Y^n)}^\frac{1}{p},$$
whereas for any integer $N\in\N^*$, there exists a constant $c_N>0$ such that  $-\log t \leq c_N t^{-1/N}$ when $t\le 1/e$, hence
$$  \|u'-v'\|_{L^p(\omega_Y^n)}\leq  c'' \|u'-v'\|_{L^1(\omega_Y^n)}^{\frac{1}{p}\left( 1- \frac{p-1}{N}  \right)}~.$$
As $ \|u'-v'\|_{L^1(\omega_Y^n)} \leq  2\|u-v\|_{L^1(\omega_Y^n)}$,  combining \eqref{eq:dinh}, \eqref{eq 1} and \eqref{eq 2} we get
\begin{align*}
\|f^*u-f^*v\|_{L^1(\mu)} 
&\leq C \max\left(  \|u-v\|_{L^1(\omega_Y^n)}^{\tilde{\beta}},  \|u-v\|_{L^1(\omega_Y^n)}   \right)
~, 
\end{align*}
with $\tilde{\beta} = \frac{\beta}{p}\left( 1- \frac{p-1}{N}  \right)$.
By \cite[Lemma 3.3]{DN14} $f_* \mu= (\omega_Y+dd^c \psi)^n$ where $\psi$ is a H\"older continuous function.

To get a bound on the H\"older regularity of $\psi$, one argues as follows. 
First if $\mu = (\om + dd^c \varphi)^n$ with $\varphi$ a $\alpha$-H\"older potential, and $\pi\colon  \Gamma \to X$ is a proper modification, then 
$\pi^* \mu$ is dominated by a Monge-Amp\`ere measure with $\alpha$-H\"older potential, and \cite[Proposition 3.3 (ii)]{demailly et al} is satisfied with $b = 2\alpha/(\alpha + 2n)$ by~\cite[Theorem 4.3 (iii)]{demailly et al}.
Hence, following the proof of ~\cite[Theorem]{demailly et al}, we see that $\pi^* \mu$ is a Monge-Amp\`ere measure of a $\alpha_1$-H\"older continuous
potential with $\alpha_1 <b/(n+1) $ (see Remark below for more details about the latter statement).

By~\cite[Proposition 4.1]{DN14},~\eqref{eq:dinh} holds with $\beta = \alpha^n_1/(2 + \alpha_1^n)$, and~\eqref{eq the goal} is then satisfied for any 
$\tilde{\beta}<\beta/p$ so that $f_*\mu$ is a Monge-Amp\`ere measure with $\tilde{\alpha}$-H\"older potential for any $\tilde{\alpha} < 2 \tilde{\beta}/(m+1)$, see the discussion on~\cite[p.83]{DN14}. Combining all these estimates we see that any 
$$
\tilde{\alpha} < \frac{\alpha^n}{p(m+1)(\alpha/2+n)^n(n+1)^{n}}
$$
works where $p$ is the conjugate of the larger constant $\varepsilon>0$ for which Proposition~\ref{pro:push volume} holds.
\end{proof}
\begin{remark}
We borrow notations from the proof of \cite[Theorem A]{demailly et al}. Fix $\alpha_1< b/(n+1)$ and choose $\varepsilon>0$ such that $\alpha_1\leq \alpha\leq \alpha_0\leq b-\alpha_0(n+\varepsilon)$. By the previous arguments we know that condition $(ii)$ in \cite[Proposition 3.3]{demailly et al} holds, i.e. for any $\phi\in \PSH(\Gamma, \omega_\Gamma)$, we have $\|\rho_\delta \phi- \phi\|_{L^1(\pi^* \mu)}= O(\delta^b)$, where $b =2\alpha/(\alpha + 2n)$. In particular, this gives 
$$\pi^* \mu (E_0) \leq c_1 \delta^{b-\alpha_0}.$$ 
Let $g\in L^1(\pi^*\mu)$ be defined as $g=0$ on $E_0$ and $g= c$ on $\Gamma\setminus E_0$ where $c$ is a positive constant such that $\pi^*\mu(\Gamma)= \int_\Gamma g\, d( \pi^* \mu)$. An easy computation gives that $c= \pi^*\mu(\Gamma)/ \pi^*\mu(\Gamma\setminus E_0)$. Let $v\in \PSH(\Gamma,\omega_\Gamma)$ be the bounded solution of the Monge-Amp\`ere equation $(\omega_\Gamma+dd^c v)^n= g \cdot \pi^* \mu$. Observe that $$\|1-g\|_{L^1(\pi^* \mu)} = \int_{E_0} d  \pi^*\mu + \int_{\Gamma \setminus E_0} |1-c|\, d  \pi^*\mu =2 \int_{E_0} d\pi^*\mu \leq 2 c_1 \delta^{b-\alpha_0}.$$
Since $\pi^* \mu= (\omega_\Gamma+dd^c \tilde{\varphi})^n$ satisfies the $\mathcal{H}(\infty)$ property we can still apply \cite[Theorem 1.1]{DZ} and get $$\|\tilde{\varphi}-v\|_{L^\infty} \leq c_3 \delta^{\frac{b-\alpha_0}{n+\varepsilon}}.$$
The exact same arguments as in \cite[Theorem A]{demailly et al} then insure that the H\"older exponent of $\tilde{\varphi}$ is $\alpha_1$.
\end{remark}

%%%%%%%%%%%%%%%%%%%%%%%%%%%%%%%%%%%%%%%%%%%%%%%%%%

\section{Over a one-dimensional base: proof of Theorem~\ref{thm2}}

In this section we treat Problem~\ref{pbm1} in the case the base is a Riemann surface. 

We start with the case of a surjective holomorphic map $ f\colon  X \to Y$ from a K\"ahler compact manifold to a compact Riemann surface. 

\medskip
%
%
%When $\mu$ is  locally the Monge-Amp\`ere of a continuous psh function, then we pick a sequence of subharmonic functions $u_n$ on $Y$ converging in $L^1_{\mathrm{loc}}$ to $u$, all defined on a fixed  neighborhood of a point $y \in Y$.  Since $u_n\to u$ in $L^p_{\mathrm{loc}}$ for all $p<\infty$, Proposition~\ref{pro:push volume} implies that $f^* u_n \to f^*u$ in the $L^1_{\mathrm{loc}}$ topology, and the convergence $$\int u_n \, d(f_*\mu) = \int f^*u_n\,  d\mu \to \int u \,d(f_*\mu)$$ follows from
%e.g.~\cite[Theorem~1.7]{demailly93} (multiply $f^*u_n$ by a smooth compactly supported function and apply Stokes).
%This implies $f_*\mu$ to be locally the Laplacian of a continuous function (apply to $u_n = \log |y -y_n|$ with $y_n \to y_\infty$).
%
%\medskip

Let $\mu = (\om_X + dd^c \varphi)^{ n}$ be  a Monge-Amp\`ere measure of a continuous $\om_X$-psh function $\varphi$.
Suppose $v_k, v$ is a family of $\om_X$-psh functions such that $v_k \to v$ in $L^1$, then 
\begin{align*}
\int_X v_k \, d\mu 
&= 
\int_X v_k \, (\omega_X + dd^c \varphi)^{ n}
\\
&=\int_X v_k \, \omega_X^{n}
+ \sum_{j=0}^{n-1} \int_X \varphi\, dd^c v_k \wedge \omega_X^j \wedge (\omega_X + dd^c \varphi)^{ n-j-1}
\to  \int_X v \, d\mu 
\end{align*}
by ~\cite[Corollary~1.6 (a)]{demailly93}. Observe that in the last equality we used the fact that $$(\omega_X + dd^c \varphi)^{ n} - \omega_X^n = \sum_{j=0}^{n-1} dd^c \varphi \wedge \omega_X^j \wedge (\omega_X + dd^c \varphi)^{ n-j-1}$$ and Stokes' theorem.

Normalize the K\"ahler form on $Y$ such that $\int \om_Y =1$,  
and pick any sequence $y_k\to y_\infty\in Y$.  Let $w_k$ be the 
solutions of the equations $\Delta w_k = \delta_{y_k} - \om_Y$ with $\sup w_k =0$ so that 
$w_k (y) - \log |y-y_k|$ is continuous in local coordinates near $y_k$. 
Write $f_*\mu = \om_Y + dd^c \psi$ so that 
$$\int_Y w_k \, d(f_*\mu) =\int_Y w_k \, \om_Y + \int_Y \psi \, \Delta w_k
 = \psi(y_k) + \int_Y (w_k-\psi)  \om_Y ~.$$
Since $w_k\to w_\infty$ in $L^p_{\mathrm{loc}}$ for all $p<\infty$, Proposition~\ref{pro:push volume} 
implies that $ f^*w_k \to  f^*w_\infty$ in the $L^1$ topology, 
so that the argument above gives $\int_Y w_k \,d(f_*\mu)=  \int_X  f^* w_k \, d\mu \to \int_X  f^* w_\infty \, d\mu=\int_Y w_\infty\, d(f_*\mu)$ 
We then conclude that $\psi(y_k) \to \psi(y_\infty)$. Hence $\psi$ is continuous.

\medskip

Suppose then that $\mu$ is locally  the Monge-Amp\`ere of a bounded psh function, and pick any subharmonic function $u$ defined in a neighborhood of a point $y\in Y$. 
Then $f^* u$ is again psh in a neighborhood of $f^{-1}(y)$, and the standard Chern-Levine-Nirenberg inequalities imply that $f^*u \in L^1(\mu)$ so that $u \in L^1(f_*\mu)$
with  a norm depending only on the $L^1$-norm of $u$.
It follows that $f_*\mu$ is locally the laplacian of a bounded subharmonic function.

\medskip

Finally, assume $\mu=(\omega_X+ dd^c \varphi)^n$ for some $\varphi\in \mathcal{E}^p(X, \omega_X)$. By \cite[Theorem C]{GZ07} this is equivalent to have that $\mathcal{E}^p(X,{\omega_X}) \subset L^p (\mu)$. Write as usual  $f_* \mu= (\omega_Y+dd^c \psi)$ with $\psi\in  \mathcal{E}(Y, \omega_Y)$.

We claim that $u\in \mathcal{E}^p(Y, \omega_Y)$ implies $f^* u\in \mathcal{E}^p(X, \omega_X)$. Indeed, without loss of generality we can assume that $\Omega:= \omega_X - f^*\omega_Y$ is a K\"ahler form and by the multilinearity of the non-pluripolar product we have
\begin{align*}
\int_X |f^* u|^p (\omega_X+dd^c f^* u)^n &= \int_X  |f^* u|^p (f^*\omega_Y + \Omega + dd^c f^* u)^n\\
&= \int_X  |f^* u|^p \left(\Omega^n + (f^*\omega_Y + dd^c f^* u) \wedge \Omega ^{n-1} \right)
\end{align*}
where the last identity follows from the fact that $(f^*\omega_Y + dd^c f^* u)^j=0$ for $j>1$. The term $\int_X  |f^* u|^p \Omega ^{n}$ is bounded thanks to the integrability properties of quasi-plurisubharmonic functions w.r.t. volume forms \cite[Theorem 1.47]{GZ17}; while the term
$$\int_X  |f^* u|^p (f^*\omega_Y + dd^c f^* u)\wedge \Omega ^{n-1} = C(f) \int_Y |u|^p \,(\omega_Y + dd^c u)$$ is finite since $u\in \mathcal{E}^p(Y, \omega_Y)$. This proves the claim. 

Now, given any $u\in \mathcal{E}^p(Y, \omega_Y)$ we have
$$\int_Y |u|^p\,  d(f_* \mu)= \int_X |f^* u|^p d\mu<+\infty$$
since $f^* u\in \mathcal{E}^p(X, \omega_X) \subset L^p(\mu)$. The conclusion follows from \cite[Theorem C]{GZ07}.

\vspace{5mm}

Consider now any dominant meromorphic map  $ f\colon  X  \dashrightarrow  Y$ from a K\"ahler compact manifold to a compact Riemann surface. As above we decompose $f$ such that $f_* \mu =(\pi_2)_* \pi_1^* \mu$ for any positive measure $\mu$ on $X$.

Assume that $\mu$ has continuous potentials. If we write $\mu=(\omega_X+dd^c \varphi)^n $ then
$\pi_1^*\mu= (\pi_1^* \omega_X+dd^c \varphi\circ \pi)^n \leq (C\omega_\Gamma+ dd^c \varphi\circ \pi)^n:= \hat{\mu}$ where $ \hat{\mu}$ has a continuous potential. This implies $f_* \mu \leq (\pi_2)_* \hat{\mu}$. Observe that by the previous arguments  the measure $(\pi_2)_* \hat{\mu}$ has continuous potential. It follows that locally $f_* \mu =\Delta v \leq \Delta u$ where $u,v$ are subharmonic functions.   It follows that $v$ is the sum of a continuous function and the opposite of a subharmonic (hence u.s.c.) function. 
Since it is also u.s.c we conclude to its continuity. 

When $\mu$ has bounded potentials, the same argument applies noting that subharmonic functions are always bounded from above which implies $v$ to be bounded. 

Finally, we consider the case where $\mu$ is the Monge-Amp\`ere measure of $\varphi\in \mathcal{E}^p(X, \omega_X)$. We first observe that given $v\in  \mathcal{E}^p(\Gamma, \omega_\Gamma)$ we have $(\pi_1)_* v \in \mathcal{E}^p(X, \omega_X)$. Indeed,
$$\int_X |v \circ \pi^{-1}|^p (\omega_X +dd^c v\circ \pi^{-1})^n = \int_\Gamma |v|^p (\pi_1^* \omega_X +dd^c v)^n \leq \int_\Gamma |v|^p ( C\omega_\Gamma +dd^c v)^n < +\infty.$$
This and the previous arguments give that if $u\in \mathcal{E}^p(Y, \omega_Y)$ then $f_* u=(\pi_1)_* \pi_2^* u \in \mathcal{E}^p(X, \omega_X)$, hence
$$\int_Y |u|^p d f_* \mu = \int_X |u\circ f|^p d\mu <+\infty.  $$
It follows from \cite[Theorem C]{GZ07} that $f_* \mu $ is the Monge-Amp\`ere measure of a function in $\mathcal{E}^p(Y, \omega_Y)$.

\section{The case of submersions}
In this section we let $(X, \omega_X), (Y, \omega_Y)$ be two compact K\"ahler manifolds of dimension $n$ and $m$, respectively and normalized such that $\int_X \omega_X^n=1= \int_Y \omega_Y^m$.

\begin{proposition}
Let $f\colon  X \to Y$ be a submersion. Then, $u\in \cE^p(Y, \omega_Y)$ implies $f^* u  \in \cE^p (X, \omega_X)$. In particular, if  a probability measure $\mu$ is the Monge-Amp\`ere of a function in $\mathcal{E}^p$ then also $f_* \mu $ has also a potential  in $\mathcal{E}^p.$
\end{proposition}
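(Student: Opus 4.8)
The plan is to follow the structure of the $\cE^p$ case of Theorem~\ref{thm2}, reducing the ``in particular'' assertion to the pullback statement and then proving the latter over a base of arbitrary dimension $m$. Writing $f_*\mu = (\omega_Y + dd^c\psi)^m$ with $\psi\in\cE(Y,\omega_Y)$, by \cite[Theorem C]{GZ07} it suffices to check $\cE^p(Y,\omega_Y)\subset L^p(f_*\mu)$. For $u\in\cE^p(Y,\omega_Y)$ one has $\int_Y|u|^p\, d(f_*\mu) = \int_X|f^* u|^p\, d\mu$, which is finite as soon as $f^* u\in\cE^p(X,\omega_X)$, since $\mu$ being the Monge-Amp\`ere measure of an $\cE^p$ potential gives $\cE^p(X,\omega_X)\subset L^p(\mu)$ again by \cite[Theorem C]{GZ07}. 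Thus everything reduces to the claim $u\in\cE^p(Y,\omega_Y)\Rightarrow f^* u\in\cE^p(X,\omega_X)$.

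To prove the claim I would first, exactly as in Theorem~\ref{thm2}, rescale $\omega_X$ (which does not alter the class $\cE^p(X,\omega_X)$) so that $\Omega:=\omega_X - f^*\omega_Y$ is K\"ahler. Since $\omega_X + dd^c f^* u = \Omega + f^*(\omega_Y + dd^c u)\ge 0$, the function $f^* u$ is $\omega_X$-psh. Using the multilinearity of the non-pluripolar product and the vanishing $(f^*\omega_Y + dd^c f^* u)^k = f^*\big((\omega_Y + dd^c u)^k\big) = 0$ for $k>m$, I would expand
\[
(\omega_X + dd^c f^* u)^n = \sum_{k=0}^{m}\binom{n}{k}\, f^*\big((\omega_Y + dd^c u)^k\big)\wedge \Omega^{\,n-k}.
\]
Because $f$ is a submersion, integration along the (compact) fibres turns each term into an integral on $Y$: with $\eta_k := f_*(\Omega^{\,n-k})$ a smooth positive $(m-k,m-k)$-form, one gets $\int_X |f^* u|^p\, f^*\big((\omega_Y + dd^c u)^k\big)\wedge\Omega^{n-k} = \int_Y |u|^p\,(\omega_Y + dd^c u)^k\wedge\eta_k$.

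It then remains to bound three types of terms. The term $k=0$ is $\int_X|f^* u|^p\,\Omega^n\le \int_X|f^* u|^p\,\omega_X^n$, finite by the volume-form integrability of quasi-psh functions \cite[Theorem~1.47]{GZ17}. The top term $k=m$ equals $\int_Y|u|^p\,(\omega_Y + dd^c u)^m\wedge\eta_m$ with $\eta_m$ a bounded function, hence is finite precisely because $u\in\cE^p(Y,\omega_Y)$. The main obstacle, and the only genuinely new point compared with the one-dimensional base of Theorem~\ref{thm2}, is the intermediate range $0<k<m$; after dominating $\eta_k\le C\,\omega_Y^{m-k}$ this is the mixed-energy finiteness $\int_Y|u|^p\,(\omega_Y + dd^c u)^k\wedge\omega_Y^{m-k}<\infty$. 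I would obtain it from a monotonicity of weighted energies: normalizing $u\le 0$ and integrating by parts,
\[
\int_Y(-u)^p(\omega_Y + dd^c u)^{k+1}\wedge\omega_Y^{m-k-1}
= \int_Y(-u)^p(\omega_Y + dd^c u)^{k}\wedge\omega_Y^{m-k}
+ p\!\int_Y(-u)^{p-1}\,du\wedge d^c u\wedge(\omega_Y + dd^c u)^{k}\wedge\omega_Y^{m-k-1},
\]
whose last term is nonnegative. Hence these mixed energies increase with $k$ and are all dominated by the top one $\int_Y(-u)^p(\omega_Y + dd^c u)^m<\infty$. This step should be justified via the approximation $u=\lim_j\max(u,-j)$ and the integration-by-parts formulae valid in the energy classes \cite{GZ17}, and I expect it to be the most delicate part to write carefully.

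Finally, to see $f^* u\in\cE(X,\omega_X)$ (full mass) I would compare total masses term by term: the same expansion gives $\int_X(\omega_X + dd^c f^* u)^n = \sum_k\binom nk\int_Y(\omega_Y+dd^c u)^k\wedge\eta_k$ and $\int_X\omega_X^n = \sum_k\binom nk\int_Y\omega_Y^k\wedge\eta_k$, so the two agree once $\int_Y(\omega_Y + dd^c u)^k\wedge\eta_k = \int_Y\omega_Y^k\wedge\eta_k$ for every $k\le m$, which is the full mixed mass of $u\in\cE(Y,\omega_Y)$, a standard property of the class $\cE$ \cite{GZ17}. Combining these facts yields $f^* u\in\cE^p(X,\omega_X)$, and the ``in particular'' statement follows as explained in the first paragraph.
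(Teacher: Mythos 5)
Your proof is correct, but it takes a genuinely different route from the paper's. The paper argues in local submersion charts $f(z,w)=z$: it dominates $\omega_X$ on each chart by $A_j f^*\omega_Y + C_j\tfrac{i}{2}dw\wedge d\bar w$, where the complementary term is \emph{purely vertical}, so that in the binomial expansion of $\bigl(A_jf^*\omega_Y + C_j\tfrac{i}{2}dw\wedge d\bar w + dd^c f^*u\bigr)^n$ every term dies except the one with exactly $m$ factors of $A_jf^*\omega_Y+dd^c f^*u$ (lower ones vanish because $(dw\wedge d\bar w)^{n-\ell}=0$ for $\ell<m$, higher ones because pullbacks of forms of degree $>2m$ from $Y$ vanish); after Fubini the only input needed is the top weighted energy $\int_Y |u|^p(A\omega_Y+dd^c u)^m$, finite since $u\in\cE^p(Y,A\omega_Y)$. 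Your global splitting $\omega_X=\Omega+f^*\omega_Y$ with $\Omega$ K\"ahler instead keeps all terms $0\le k\le m$ alive, and after the projection formula you must control the mixed weighted energies $\int_Y(-u)^p(\omega_Y+dd^cu)^k\wedge\omega_Y^{m-k}$ --- precisely the extra lemma that the paper's choice of a vertical complement avoids. That lemma is true and your integration-by-parts identity is the standard way to get it: $\int(-u)^p\,dd^cu\wedge S=p\int(-u)^{p-1}du\wedge d^cu\wedge S\ge 0$ for closed positive $S$, justified on the approximants $\max(u,-j)$ and passed to the limit (for $0<p<1$ normalize $u\le-1$ rather than $u\le 0$ so that $(-u)^{p-1}$ stays bounded); alternatively one can avoid it altogether by writing $(\omega_Y+dd^c(u/2))^m\ge 2^{-m}\binom{m}{k}(\omega_Y+dd^cu)^k\wedge\omega_Y^{m-k}$ and using convexity of $\cE^p$, or cite the finiteness of mixed energies from \cite{GZ07, GZ17}. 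What each approach buys: the paper's is shorter and needs no mixed-energy input, at the cost of being coordinate-dependent; yours is coordinate-free, is the direct generalization of the $m=1$ computation in the proof of Theorem~\ref{thm2}, and --- a genuine plus --- explicitly verifies that $f^*u$ has full Monge-Amp\`ere mass, i.e.\ $f^*u\in\cE(X,\omega_X)$, a point which is needed for membership in $\cE^p(X,\omega_X)$ but is left implicit in the paper's proof.
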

\begin{proof}
Since $f$ is a submersion we can assume that there is a finite number of open neighbourhoods $U_i$ such that $X\subset \bigcup_{j=0}^N U_j$, $f|_{U_j}(z,w)=z$ where $z=(z_1, \dots, z_m)$ and $w=(z_{m+1}, \dots, z_n)$. Moreover  we can assume that on each $U_j$ we have 
$$\omega_X\leq C_j \frac{i}{2}\left(dz \wedge d\bar{z} + dw \wedge d\bar{w}\right), \qquad \frac{i}{2}dz \wedge d\bar{z}\leq A_j f^* \omega_Y$$ where $A_j,C_j>1$ and $dz\wedge d\bar{z}, dw \wedge d\bar{w}$ are short notations for $\sum_{j=1}^m dz_j\wedge d\bar{z_j}$ and $\sum_{k=m+1}^n dz_k\wedge d\bar{z_k}$, respectively. We then write
\begin{align*}
\int_X |f^* u|^p (\omega_X+dd^c f^* u)^n &\leq  \sum_{j=1}^N  \int_{U_j} |f^* u|^p \left(C_j  \frac{i}{2}dz \wedge d\bar{z} + C_j \frac{i}{2}dw \wedge d\bar{w}+dd^c f^* u\right)^n\\
&\leq  \sum_{j=1}^N  \int_{U_j} |f^* u|^p \left(  A'_j f^* \omega_Y +C_j \frac{i}{2}dw \wedge d\bar{w}+dd^c f^* u\right)^n\\
&= \sum_{j=1}^N  \sum_{\ell=0}^n\int_{U_j} |f^* u|^p \left(  A'_j f^* \omega_Y +dd^c f^* u\right)^\ell \wedge \left(C_j \frac{i}{2}dw \wedge d\bar{w}\right)^{n-\ell}\\
&= \sum_{j=1}^N \int_{U_j} |f^* u|^p \left(  A'_j f^* \omega_Y +dd^c f^* u\right)^m\wedge \left(C_j \frac{i}{2}dw \wedge d\bar{w}\right)^{n-m}.
\end{align*}
The above integral is then finite because by assumption $u\in \mathcal{E}^p(Y, A\omega_Y)$ for any $A\geq 1$.

The last statement follows from the same arguments in the last part of the proof in the previous section.
\end{proof}

%%%%%%%%%%%%%%%%%%%%%%%%%%%%%%%%%%%%%%%%%%%%%%%%%%

\section{Tame families of Monge-Amp\`ere measures: proof of Corollary~\ref{thm:degeneration model}}

Recall the setting from the introduction: $ \mathcal{X} $ is a smooth connected complex manifold of dimension $n+1$, and 
$\pi\colon  \mathcal{X} \to \D$ is a flat proper analytic map over the unit disk
which is a submersion over the punctured disk and has connected fibers. 
We let $p\colon  \cX' \to \cX$ be a proper bi-meromorphic map from a smooth complex manifold $\cX'$
which is an isomorphism over $\pi^{-1}(\D^*)$.

We let $T$ be any closed positive $(1,1)$-current on $\cX'$ admitting local H\"older continuous potentials.
Observe that by e.g.~\cite[Corollary 1.6]{demailly93} we have  $$\mu'_t =  dd^c \log|\pi\circ p -t| \wedge T^{n} \to \mu'_0 := dd^c \log|\pi\circ p| \wedge T^{n}~.$$

Let us now analyze the structure of the positive measure 
$\mu_0:= p_*\mu'_0$. First observe that $\mu'_0$ can be decomposed as a finite sum of positive measures
$\mu'_E :=  (T|_{E})^{n}$ where the sum is taken over all irreducible components $E$ of $\cX'_0$. 
Each of these measures is locally the Monge-Amp\`ere of a H\"older continuous psh function. 

Write $V := p(E)$. Since $E$ is irreducible, $V$ is also an irreducible (possibly singular) subvariety of dimension $\ell$. To conclude the proof it remains to show that 
$p_*(\mu'_E)$ is the Monge-Amp\`ere measure of H\"older continuous function that is locally the sum of a smooth and psh function. More precisely, one needs to show that $p_*(\mu'_E)$ does not charge any proper algebraic
subset of $V$, and given any resolution of singularities $\varpi \colon  V' \to V$ the pull-back measure $\varpi^* (p_*(\mu'_E))$ can be locally written as $(dd^c u)^\ell$ where $u$ is a H\"older psh function on $V'$.

This follows from Theorem~\ref{thm1}  applied to any resolution of singularities $V'$ of $V$ and to any $E'$ which admits a birational morphism
$E'\to E$ such that the map $E' \to V'$ induced by $p$ is also a morphism.

%%%%%%%%%%%%%%%%%%%%%%%%%%%%%

%%%%%%%%%%%%%%%%%%%%%%%%%%%%%

\end{document}